\newtheorem{prop}{Proposition}[section]
\newtheorem{cor}[prop]{Corollary}
\begin{document}
\begin{center}
{\large \bf On Two OEIS Conjectures}\\
Jeremy M. Dover
\end{center}
\begin{abstract}
In~\cite{stephan}, Stephan enumerates a number of conjectures regarding
integer sequences contained in Sloane's On-line Encyclopedia of Integer
Sequences~\cite{oeis}. In this paper, we prove two of these conjectures.
\end{abstract}

\section{Proof of Conjecture 110}

In~\cite{stephan} Stephan formulates the following conjecture:
\begin{quote}
Define $a_n = |\{(i,j): 0 \le i,j < n \; {\rm and} \; i \; {\rm AND} \; j > 0\}|$,
where AND is the bitwise and operator. Then $a_n$ is the sequence given by the 
recursions $a_{2n} = 3a_n + n^2$, $a_{2n+1} = a_n +2a_{n+1} + n^2-1$, with initial 
conditions $a_0 = a_1 = 0$.
\end{quote}

Truth for the initial conditions is easy to calculate. In what follows, we define $S_n
= \{(i,j): 0 \le i,j < n \; {\rm and} \; i \; {\rm AND} \; j > 0\}$ from which we have
$a_n = |S_n|$.

To see that $a_{2n} = 3a_n + n^2$, we partition the set $S_{2n}$ into four parts, $EE_{2n}$,
$EO_{2n}$, $OE_{2n}$ and $OO_{2n}$ where $EE_{2n} = \{(x,y) \in S_{2n}: x,y \; {\rm even}\}$ 
and the other sets are defined analogously. We count the number of elements in each set.

Since the AND of two odd numbers is always at least 1, $OO_{2n}$ consists of all pairs of
odd numbers $(x,y)$ with $0 \le x,y < n$, of which there are $n^2$ possibilities. To count
the number of elements in $EE_{2n}$, let $(2i,2j)$ be an element of $EE_{2n}$ which forces
$0 \le i,j < n$. We know $2i$ AND $2j$ is nonzero, and since the low order bit of both 
numbers is zero, we must have $i$ AND $j$ nonzero. Thus for each $(2i,2j) \in EE_{2n}$ we 
have the corresponding pair $(i,j) \in S_n$. Moreover for any pair $(i,j) \in S_n$ it is
easy to see that $(2i,2j) \in S_{2n}$. Therefore $|EE_{2n}| = |S_n| = a_n$.

As with the previous case we can write each element of $OE_{2n}$ as $(2i+1,2j)$ for $0 \le 
i,j < n$ where $2i+1$ AND $2j$ is nonzero. Since the low order bit of $2j$ is zero, we must
have $i$ AND $j$ nonzero, thus $(i,j) \in S_n$. Then for any $(i,j) \in
S_n$ we have $(2i+1,2j) \in S_{2n}$, showing that $|OE_{2n}| = a_n$. Since $|EO_{2n}|$ is 
obviously equal to $|OE_{2n}|$, we find that $a_{2n} = 3a_n + n^2$ as conjectured.

To show $a_{2n+1} = a_n + 2a_{n+1} + n^2 - 1$ we use the same sort of analysis, but one
of the counts is more tricky. Using the same definitions for $OO$, et. al., the exact same
argument as above shows that $|OO_{2n+1}| = n^2$. For $EE_{2n+1}$ we again look at elements
$(2i,2j) \in S_{2n+1}$ and still must have $i$ AND $j$ nonzero, but now $0 \le i,j < n+1$
since either $i$ and/or $j$ can be $n$. Thus $|EE_{2n+1}| = a_{n+1}$, as opposed to $a_n$
for the previous case.

Counting $|OE_{2n+1}|$ is more tricky in this case. Again we have that the elements of 
$OE_{2n+1}$ are the pairs $(2i+1,2j)$ such that $i$ AND $j$ is nonzero, but in this case 
$0 \le i < n$, while $0 \le j < n+1$. Thus $|OE_{2n+1}|$ is the number of elements of 
$(i,j) \in S_{n+1}$ for which $i \neq n$. To determine this number we provide a different
partition of $S_{n+1}$ as:
\begin{eqnarray*}
S_{n+1} & = &\{(i,j) \in S_{n+1}: i,j < n\} \cup \{(n,j) \in S_{n+1}: j < n\} \cup \\ 
& & \{(i,n) \in S_{n+1}: i < n\} \cup \{(n,n)\}
\end{eqnarray*}

In this expression, the set $\{(i,j) \in S_{n+1}:i,j < n\}$ is exactly $S_n$. If we let $x$
be the cardinality of $\{(n,j) \in S_{n+1}:j < n\}$, then by taking cardinalities of each
partition element we have
\begin{displaymath}
a_{n+1} = a_n + 2x + 1
\end{displaymath}

Therefore, $x = \frac12(a_{n+1} - a_n -1)$ and we have $|OE_{2n+1}| = a_n + \frac12(a_{n+1} - a_n -1)
= \frac12(a_{n+1}+a_n-1)$. Since $EO_{2n+1}$ has the same cardinality we finally have
\begin{eqnarray*}
a_{2n+1} & = & n^2 + a_{n+1} + \frac12(a_{n+1}+a_n-1) + \frac12(a_{n+1}+a_n-1) \\
& = & a_n + 2a_{n+1} +n^2-1
\end{eqnarray*}
which finishes the conjecture.

\section{Proof of Conjecture 115}

In~\cite{stephan} Stephan formulates the following conjecture:
\begin{quote}
Define the sequence $a_n$ by $a_1=1$ and $a_n = M_n + m_n$, where $M_n = 
{\rm max}_{1 \le i < n} (a_i + a_{n-i})$ and $m_n = {\rm min}_{1 \le i < n} 
(a_i + a_{n-i})$. Let further $b_n$ be the number of binary partitions of $2n$
into powers of 2 (number of {\em binary partitions}). Then
\end{quote}
\begin{displaymath}
m_n = \frac{3}{2}b_{n-1}-1,\; M_n = n + \sum_{k=1}^{n-1} m_n, \; a_n = M_{n+1} -1
\end{displaymath}

We prove this conjecture through a series of short induction proofs. For 
convenience we define $m_1 = 1$.

\begin{prop}
\label{prop1}
The sequence $a_n$ is strictly increasing, and thus positive for all $n \ge 1$.
Moreover, each of the sequences $M_n$ and $m_n$ is also strictly increasing
and positive for all $n \ge 2$.
\end{prop}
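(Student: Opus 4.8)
The plan is to fold all four assertions into a single strong induction whose central claim is that $a_1 < a_2 < \cdots < a_n$ holds for every $n \ge 2$. The base case $a_1 = 1 < 4 = a_2$ is immediate from the definition ($M_2 = m_2 = a_1 + a_1 = 2$). For the inductive step, assuming $a_1 < \cdots < a_n$ with $n \ge 2$, I would establish the two auxiliary inequalities $M_{n+1} > M_n$ and $m_{n+1} > m_n$; adding them gives $a_{n+1} = M_{n+1} + m_{n+1} > M_n + m_n = a_n$, which extends the chain. Once strict monotonicity of $(a_n)$ is secured, positivity is automatic since $a_1 = 1 > 0$; and the two auxiliary inequalities, proved for all $n \ge 2$ along the way, together with $M_2 = m_2 = 2 > 0$, immediately yield that $M_n$ and $m_n$ are strictly increasing and positive for $n \ge 2$.

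The inequality $M_{n+1} > M_n$ is the easy one: if a pair $(i, n-i)$ with $1 \le i \le n-1$ attains $M_n$, then $(i, n+1-i)$ is an admissible pair for $M_{n+1}$, and since $1 \le n-i < n+1-i \le n$ the inductive hypothesis gives $a_{n+1-i} > a_{n-i}$, so $M_{n+1} \ge a_i + a_{n+1-i} > a_i + a_{n-i} = M_n$.

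I expect the inequality $m_{n+1} > m_n$ to be the main obstacle, because the location of the minimizing index is not under direct control, so one cannot simply transport a single fixed pair as in the $M$ case. The idea is to use the symmetry $i \leftrightarrow n+1-i$ of $a_i + a_{n+1-i}$ to choose a minimizer $i^\ast$ for $m_{n+1}$ with $i^\ast \le (n+1)/2$, and then split on whether $i^\ast$ is at the endpoint or in the interior. If $i^\ast = 1$, then $m_{n+1} = a_1 + a_n$, while $m_n \le a_1 + a_{n-1} < a_1 + a_n = m_{n+1}$. If $i^\ast \ge 2$, then $(i^\ast - 1, n+1-i^\ast)$ is an admissible pair for $m_n$ — both entries lie in $[1, n-1]$ and they sum to $n$ — so $m_n \le a_{i^\ast-1} + a_{n+1-i^\ast} < a_{i^\ast} + a_{n+1-i^\ast} = m_{n+1}$, using $a_{i^\ast-1} < a_{i^\ast}$ from the inductive hypothesis. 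In either case $m_{n+1} > m_n$, closing the induction.

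The only delicate part of the write-up is bookkeeping on index ranges: at each use of the inductive hypothesis I must check that the indices invoked (such as $n-i$, $i^\ast - 1$, and $n+1-i^\ast$) actually lie in $\{1, \ldots, n\}$, which is where $a_1 < \cdots < a_n$ is available, and that $n \ge 2$ so that $m_n$ and $M_n$ are defined. These checks are routine but should be stated explicitly.
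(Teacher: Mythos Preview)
Your proof is correct and follows essentially the same approach as the paper: a strong induction on the monotonicity of $(a_n)$, with the monotonicity of $M_n$ and $m_n$ established by transporting an optimal index from $S_n$ to $S_{n+1}$ or vice versa. The only minor difference is that the paper obtains $a_n > a_{n-1}$ directly from $a_n = M_n + m_n > M_n \ge a_{n-1} + a_1$, rather than by adding the two auxiliary inequalities as you do; your explicit case split on $i^\ast$ also handles an edge case (when the minimizer sits at the endpoint) that the paper leaves implicit.
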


\begin{proof}
It suffices to show that $a_n > a_{n-1}$, $M_n > M_{n-1}$, and
$m_n > m_{n-1}$ for all $n \ge 3$, and we proceed by induction on $n$. By 
definition $a_1 = 1$ and $a_2$ is easily computed to be 4, proving the base
for our induction, as well as that $a_2 > a_1$.

To complete our induction step, we assume the result is true for all $a_i$
with $i < n$, and consider $a_n$. By definition $M_n$ and $m_n$ are the maximum
and minimum, respectively, of the set $S_n = \{a_i + a_{n-i}: 1 \le i < n\}$. 
By our induction hypothesis each of the $a_i$'s is positive, implying that
$M_n$ and $m_n$ are positive as well. Now $a_n = M_n + m_n > M_n$. Since
$a_{n-1} + a_1 = a_{n-1} + 1$ is in the set $S$, we must have $M_n \ge a_{n-1} +1$. Thus $a_n > M_n \ge a_{n-1} + 1$, showing that the sequence $a_n$ is
increasing.

To show that $M_n$ is increasing, suppose that the maximum value in $S_n$ is
given by the element $a_i+a_{n-i}$. Then $S_{n+1}$ contains the element 
$a_i + a_{n+1-i}$ which is strictly greater than $a_i + a_{n-i}$ since $a_i$ is
increasing. Thus the maximum value in $S_{n+1}$ must be larger than the 
greatest value in $S_n$, proving $M_n$ is increasing.

A similar argument shows $m_n$ is increasing. Now let $a_i + a_{n+1-i}$ be 
the smallest element in $S_{n+1}$, and thus equal to $m_{n+1}$. Then $S_n$
must contain the element $a_i + a_{n-i}$ which is strictly less than 
$m_{n+1}$. Then we have $m_n \le a_i + a_{n-i} < m_{n+1}$, showing $m_n$ is
increasing.
\end{proof}

\begin{prop}
\label{prop2}
$M_{n+1} = a_n + 1$ for all $n \ge 1$.
\end{prop}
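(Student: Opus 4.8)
The plan is to induct on $n$, with $n=1,2$ handled by the direct computations $a_1 = 1$, $a_2 = 4$, which give $M_2 = 2 = a_1 + 1$ and $M_3 = 5 = a_2 + 1$. For the inductive step, assume $M_{k+1} = a_k + 1$ for all $k < n$. Since $M_{n+1} = \max_{1 \le i \le n}(a_i + a_{n+1-i})$ and the two endpoint terms $i=1$ and $i=n$ both equal $a_1 + a_n = a_n + 1$, the inequality $M_{n+1} \ge a_n + 1$ is automatic; the content is to prove $a_i + a_{n+1-i} \le a_n + 1$ for each interior index $i$ with $2 \le i \le n-1$.

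The crux is to show that the consecutive differences $d_k := a_{k+1} - a_k$ are strictly increasing for $1 \le k \le n-1$, i.e., that $(a_n)$ is (discretely) convex on this range. For $2 \le k \le n-1$, write $a_k = M_k + m_k$ and use the inductive hypothesis in the forms $M_{k+1} = a_k + 1$ and $M_k = a_{k-1} + 1$ to obtain
\[
d_k = (M_{k+1}+m_{k+1}) - (M_k + m_k) = (a_k - a_{k-1}) + (m_{k+1} - m_k) = d_{k-1} + (m_{k+1} - m_k);
\]
since $m_{k+1} > m_k$ by Proposition~\ref{prop1}, this yields $d_1 < d_2 < \cdots < d_{n-1}$.

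Finally, fix an interior index $i$ and assume without loss of generality $i \le n+1-i$ (note both $i$ and $n+1-i$ then lie in $[2,n-1]$, by symmetry of $a_i + a_{n+1-i}$). Telescoping gives
\[
a_i - a_1 = \sum_{k=1}^{i-1} d_k, \qquad a_n - a_{n+1-i} = \sum_{k=n+1-i}^{n-1} d_k,
\]
two sums of $i-1$ terms in which each index of the second strictly exceeds the matching index of the first (their difference is $n-i \ge 1$); monotonicity of $(d_k)$ then forces $a_i - a_1 \le a_n - a_{n+1-i}$, that is, $a_i + a_{n+1-i} \le a_1 + a_n = a_n + 1$. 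Together with the endpoint terms this gives the reverse inequality, so $M_{n+1} = a_n + 1$ and the induction closes. The step I expect to be the main obstacle is recognizing that convexity of $(a_n)$ is the right auxiliary fact to isolate; once that is identified, deriving it from Proposition~\ref{prop1} and the inductive hypothesis, and then running the term-by-term comparison of the telescoped sums, are both routine.
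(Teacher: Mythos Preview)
Your proof is correct, and it takes a genuinely different route from the paper's argument. The paper iterates the identities $a_k = M_k + m_k$ and $M_k = a_{k-1}+1$ to obtain the explicit formula
\[
a_n + 1 \;=\; a_i + \sum_{j=i+1}^{n} m_j + (n-i+1),
\]
and then bounds $a_{n+1-i}$ above by $m_n$ (for $i$ in the upper half of the index range) to conclude $a_n+1 > a_i + a_{n+1-i}$. You instead isolate the \emph{discrete convexity} of $(a_k)$: from the same two identities you get $d_k = d_{k-1} + (m_{k+1}-m_k)$, hence $d_1 < d_2 < \cdots < d_{n-1}$ by Proposition~\ref{prop1}, and then the standard endpoint-versus-interior comparison for convex sequences finishes the job via the term-by-term telescoping inequality. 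Your argument is conceptually cleaner and makes transparent \emph{why} the maximum of $a_i + a_{n+1-i}$ sits at the boundary. The trade-off is that the paper's explicit formula is not a detour: setting $i=1$ in it immediately yields Corollary~\ref{cor3}, which is needed later, whereas your approach would have to recover that formula separately (it does follow quickly from your relation $d_k = d_{k-1} + (m_{k+1}-m_k)$ by telescoping, giving $a_{k+1}-a_k = m_{k+1}+1$, and summing).
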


\begin{proof}
We again proceed by induction on $n$, noting that the base case
$M_2 = a_1 + 1 = 2$ is easily calculated. For our strong induction hypothesis
assume $M_{i+1} = a_i +1$ for all $1 \le i < n$, and attempt to prove the
result $M_{n+1} = a_n + 1$.

By the definition of $a_n$ we have $a_n = M_n + m_n$, from which $a_n + 1 =
M_n + m_n + 1$. Using our induction hypothesis we know $M_n = a_{n-1} + 1$ from
which we obtain $a_n + 1 = a_{n-1} + m_n + 2$. Iterating this procedure of
alternately applying the definition of $a_i$ and the induction hypothesis 
gives:
\begin{equation}
\label{basicrelation}
a_n + 1 = a_i + \sum_{j=i+1}^{n} m_i + (n-i+1)
\end{equation}
for all $1 \le i < n$.

To show $M_{n+1} = a_n+1$ we must show that $a_n + 1 \ge a_i + a_{n+1-i}$ 
for all $2 \le i < n$. By symmetry in the indices of the sequence this is 
equivalent to showing this result for all $\lceil \frac{n+1}{2} \rceil \le i 
< n$.

Since $a_i$ is positive and increasing for all $i \ge 1$, 
$m_n > a_i$ for all $1 \le i \le \lceil \frac{n}{2} \rceil$ as every 
element in $S_n = \{a_i + a_{n-i}: 1 \le i < n\}$ has a summand $a_j$ for 
which $j \ge \lceil \frac{n}{2} \rceil$.

From Equation~\ref{basicrelation} we know that $a_n + 1 = a_i + 
\sum_{j=i+1}^{n} m_j + (n-i+1) > a_i + m_n$ for all $1 \le i < n$. This
implies $a_n + 1 > a_i + a_{n+1-i}$ for all $\lfloor \frac{n}{2} \rfloor + 1 
\le i < n$. As $\lfloor \frac{n}{2} \rfloor + 1 = \lceil \frac{n+1}{2} \rceil$
for all positive integers $n$, this proves the claim.
\end{proof}

\begin{cor}
\label{cor3}
$M_n = \sum_{k=1}^{n-1} m_k + n$ for all $n \ge 1$.
\end{cor}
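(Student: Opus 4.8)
The plan is to read the corollary off Proposition~\ref{prop2} by a one-line induction on $n$, using only the defining relation $a_n = M_n + m_n$. Suppose inductively that $M_n = \sum_{k=1}^{n-1} m_k + n$. Proposition~\ref{prop2} gives $M_{n+1} = a_n + 1$, and the definition of $a_n$ rewrites this as $M_{n+1} = M_n + m_n + 1$; substituting the inductive hypothesis and absorbing $m_n$ into the sum yields $M_{n+1} = \sum_{k=1}^{n} m_k + (n+1)$, which is exactly the asserted identity with $n$ advanced by one. The base cases (the smallest values of $n$) follow at once by evaluating $M_2 = a_1 + 1$ via Proposition~\ref{prop2} and comparing with the right-hand side, the $n=1$ instance being the empty-sum convention.

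An essentially equivalent but non-inductive route is to specialize Equation~\ref{basicrelation} to $i = 1$. That substitution produces $a_n + 1 = a_1 + \sum_{j=2}^{n} m_j + n$; since $a_1 = 1$, the boundary constant is precisely what is needed (in light of the convention fixed for $m_1$) to fold $a_1$ into the summation and express the right-hand side as $\sum_{k=1}^{n} m_k + n$. Combining this with $M_{n+1} = a_n + 1$ from Proposition~\ref{prop2} and re-indexing $n \mapsto n-1$ gives the statement directly.

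Neither argument presents a genuine obstacle — both are formal rearrangements of identities already established — so the only point requiring care is the bookkeeping at the lower end of the summation: verifying the base case and checking that the treatment of $m_1$ is consistent with the convention introduced just before Proposition~\ref{prop1}. I would carry out the induction version, since it confines that bookkeeping to a single base case and keeps the inductive step free of any dependence on $m_1$.
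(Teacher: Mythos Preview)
Your proposal is correct, and your second route—specializing Equation~\ref{basicrelation} to $i=1$ and invoking Proposition~\ref{prop2}—is exactly the paper's one-sentence proof. The inductive version you prefer is just that same telescoping identity unrolled one step at a time, so there is no substantive difference between the two approaches.
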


\begin{proof}
This follows immediately from Equation~\ref{basicrelation} with
$i = 1$.
\end{proof}

The conjectured value for $m_n$ is somewhat trickier, and requires the 
following intermediate result.

\begin{prop}
\label{prop4}
$m_n = a_{\lfloor \frac{n}{2} \rfloor} + a_{\lceil \frac{n}{2} \rceil}$ for 
all $n \ge 2$.
\end{prop}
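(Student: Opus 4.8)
The plan is to show that, for fixed $n$, the function $i \mapsto a_i + a_{n-i}$ on $\{1,2,\dots,n-1\}$ is unimodal with its minimum at the middle index $i = \lfloor n/2\rfloor$; the minimum value is then $a_{\lfloor n/2\rfloor} + a_{\lceil n/2\rceil}$, which by definition is $m_n$. The mechanism behind the unimodality is that the sequence $a_n$ is convex, i.e.\ its sequence of first differences is nondecreasing.

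First I would record the identity $a_n - a_{n-1} = m_n + 1$ for all $n \ge 2$. This follows at once from Proposition~\ref{prop2}: since $M_n = a_{n-1}+1$, the defining relation $a_n = M_n + m_n$ gives $a_n = a_{n-1} + 1 + m_n$. This is really the only new computation; everything afterward is a consequence of monotonicity already established.

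Next, setting $d_k := a_k - a_{k-1}$ for $k \ge 2$, the identity $d_k = m_k + 1$ combined with Proposition~\ref{prop1} (the $m_k$ are increasing for $k \ge 2$) shows $d_2 \le d_3 \le d_4 \le \cdots$. Then for $1 \le i \le n-2$ one has
\[
(a_i + a_{n-i}) - (a_{i+1} + a_{n-i-1}) = d_{n-i} - d_{i+1},
\]
and since $i+1 \ge 2$ and $n-i \ge 2$ in this range, monotonicity of the $d_k$ makes the right-hand side nonnegative exactly when $n-i \ge i+1$, that is, when $i \le (n-1)/2$. Hence $a_i + a_{n-i}$ is nonincreasing as $i$ runs from $1$ up to $\lceil n/2\rceil$, and by the symmetry $a_i + a_{n-i} = a_{n-i} + a_{n-(n-i)}$ it is nondecreasing as $i$ runs from $\lfloor n/2\rfloor$ up to $n-1$. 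So the minimum over $1 \le i < n$ is attained at $i = \lfloor n/2\rfloor$ (equivalently $i = \lceil n/2\rceil$), which yields $m_n = a_{\lfloor n/2\rfloor} + a_{\lceil n/2\rceil}$.

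I would finish by checking the two parities of $n$ separately, but only to the extent of confirming that the overlap of the "nonincreasing" and "nondecreasing" ranges of indices really does pin the minimum to the middle, and to verify the base case $n=2$ by hand. I do not expect a genuine obstacle here: the only care needed is in the floor/ceiling bookkeeping and in making sure the subscripts on the $d_k$ never drop below $2$ — in particular that $n-i \ge 2$ over the relevant range of $i$, which holds once $n \ge 3$.
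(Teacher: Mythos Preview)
Your argument is correct and is essentially the paper's own proof: both reduce the comparison of $a_i+a_{n-i}$ with $a_{i+1}+a_{n-i-1}$ to the difference $m_{n-i}-m_{i+1}$ and then invoke the monotonicity of $m_k$ from Proposition~\ref{prop1}. The only cosmetic difference is that you reach this via the first-difference identity $a_k-a_{k-1}=m_k+1$ (straight from Proposition~\ref{prop2}), whereas the paper first expands $a_i+a_{n-i}$ using Corollary~\ref{cor3} before taking the difference.
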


\begin{proof}
By the definition of $m_n$, $m_n$ is the smallest element of
$S_n = \{a_i + a_{n-i}: 1 \le i < n\}$. Using the results of 
Proposition~\ref{prop2} and Corollary~\ref{cor3}, we can calculate:
\begin{eqnarray*}
a_i + a_{n-i} & = & M_{i+1} - 1 + M_{n+1-i} - 1 \\
& = & \sum_{k=1}^{i} m_k + (i+1) + \sum_{k=1}^{n-i} m_k + (n+1-i) - 2\\
& = & n+\sum_{k=1}^{i} m_k +\sum_{k=1}^{n-i} m_k
\end{eqnarray*}

Denoting $d_i = a_i + a_{n-i}$ and noting that $d_i = d_{n-i}$, we need to show
that $d_{\lfloor \frac{n}{2} \rfloor} \le d_i$ for all $1 \le i < \lfloor
\frac{n}{2} \rfloor$. To do this, we will show that $d_i$ is a decreasing 
sequence in the interval $1 \le i \le \lfloor \frac{n}{2} \rfloor$ by looking
at the differences $d_i - d_{i+1}$ for $1 \le i < \lfloor \frac{n}{2} \rfloor$
and showing they are all positive.

To this end we calculate
\begin{eqnarray*}
d_i - d_{i+1} & = & n + \sum_{k=1}^i m_k + \sum_{k=1}^{n-i} m_k - \left[ 
n + \sum_{k=1}^{i+1} m_k + \sum_{k=1}^{n-(i+1)} m_k \right] \\
& = & m_{n-i} - m_{i+1}
\end{eqnarray*}
By Proposition~\ref{prop1}, $m_n$ is an increasing sequence, and $n-i > i+1$
for all $1 \le i < \lfloor \frac{n}{2} \rfloor$, which implies that $d_i -
d_{i+1}$ is positive for all $1 \le i < \lfloor \frac{n}{2} \rfloor$, proving
the result.
\end{proof}

Prior to proving the final piece of the conjecture, we need the following
result from Sloane~\cite{oeis}. The sequence $b_n$, denoted A000123 by Sloane,
where $b_n$ is the number of partitions of $2n$ into powers of 2 satisfies the 
recursion $b_n = b_{n-1} + b_{\lfloor \frac{n}{2} \rfloor}$, for all $n \ge 2$
with initial condition $b_1 = 2$.

\begin{prop}
$m_n = \frac{3}{2} b_{n-1} - 1$ for all $n \ge 2$.
\end{prop}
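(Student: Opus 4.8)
The plan is to prove $m_n = \tfrac{3}{2}b_{n-1}-1$ by strong induction on $n$, after rewriting both sides in terms of the sequence $b_n$ and its partial sums. Set $B_k = \sum_{i=1}^{k}b_i$ for $k\ge 1$ and $B_0=0$. The base case $n=2$ is immediate from Proposition~\ref{prop4}, since $m_2 = a_1+a_1 = 2 = \tfrac{3}{2}b_1-1$.

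For the induction step, fix $n\ge 3$ and assume $m_j = \tfrac{3}{2}b_{j-1}-1$ for all $2\le j<n$. First I record a closed form for the terms $a_k$ with $k<n$. Taking $i=1$ in Equation~\ref{basicrelation} gives $a_k = k + \sum_{j=2}^{k}m_j$ for all $k\ge 1$ (with the case $k=1$ reading $a_1=1$); substituting the induction hypothesis and simplifying the resulting sum of $b$'s yields $a_k = \tfrac{3}{2}B_{k-1}+1$ for every $1\le k<n$. Since $\lfloor n/2\rfloor$ and $\lceil n/2\rceil$ both lie in $[1,n-1]$ when $n\ge 2$, Proposition~\ref{prop4} now gives
\[
m_n = a_{\lfloor n/2\rfloor} + a_{\lceil n/2\rceil} = 2 + \tfrac{3}{2}\bigl(B_{\lfloor n/2\rfloor-1} + B_{\lceil n/2\rceil-1}\bigr).
\]

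It therefore suffices to prove the purely $b$-theoretic identity $b_{n-1} = 2 + B_{\lfloor n/2\rfloor-1} + B_{\lceil n/2\rceil-1}$, because substituting it into the display above collapses $m_n$ to $2 + \tfrac{3}{2}(b_{n-1}-2) = \tfrac{3}{2}b_{n-1}-1$, closing the induction. Writing $m=n-1$ and separating the cases $m=2s$ and $m=2s+1$, the identity becomes $b_{2s} = 2 + B_{s-1}+B_s$ and $b_{2s+1} = 2 + 2B_s$. Both follow from a single computation: expand $B_m=\sum_{i=1}^{m}b_i$, replace each $b_i$ with $i\ge 2$ using $b_i = b_{i-1}+b_{\lfloor i/2\rfloor}$, observe that $\sum_{i=2}^{m}b_{i-1}=B_{m-1}$ and that in $\sum_{i=2}^{m}b_{\lfloor i/2\rfloor}$ each value $b_r$ is counted by precisely the indices $i=2r,2r+1$ lying in the range $[2,m]$, and then read off $b_m = B_m-B_{m-1}$ using $b_1=2$.

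Most of this is bookkeeping: the summation simplification for $a_k$, and the re-indexing of the $b$-sums. The places that require attention are the floor/ceiling identities $\lfloor n/2\rfloor-1=\lfloor(n-2)/2\rfloor$ and $\lceil n/2\rceil-1=\lceil(n-2)/2\rceil$, needed so that the indices in the $b$-identity match those coming out of Proposition~\ref{prop4}, and the boundary case $s=1$ (that is, $n\in\{3,4\}$) where the inner sum over the $b_r$ degenerates; I expect establishing the $b$-theoretic identity with this index-matching to be the main obstacle, with everything afterward purely mechanical. Finally, the closed form for $a_k$ could alternatively be read off from Corollary~\ref{cor3}, but deriving it from Equation~\ref{basicrelation} avoids any reliance on the convention chosen for $m_1$.
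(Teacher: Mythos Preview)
Your argument is correct: the closed form $a_k=\tfrac32 B_{k-1}+1$ follows from Equation~\ref{basicrelation} together with the induction hypothesis, Proposition~\ref{prop4} then gives $m_n=2+\tfrac32(B_{\lfloor n/2\rfloor-1}+B_{\lceil n/2\rceil-1})$, and the partial-sum identity $b_m=2+B_{\lfloor(m-1)/2\rfloor}+B_{\lceil(m-1)/2\rceil}$ (equivalently your two parity cases) is exactly what you get by summing the basic recursion $b_i=b_{i-1}+b_{\lfloor i/2\rfloor}$ over $i=2,\dots,m$ and subtracting $B_{m-1}$. The boundary cases and the floor/ceiling shifts you flag all check out.

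The paper takes a different but closely related route: instead of strong induction plus an auxiliary partial-sum identity for $b$, it defines $c_n=\tfrac23(m_{n+1}+1)$ and verifies \emph{directly} that $c_n-c_{n-1}=c_{\lfloor n/2\rfloor}$ with $c_1=b_1$, using Proposition~\ref{prop4} and Corollary~\ref{cor3} to compute the difference. In effect, the paper checks the $b$-recursion pointwise for the transformed sequence, while you sum that recursion first and then match the summed form. The paper's approach is a bit shorter and avoids introducing $B_k$; your approach has the virtue of producing the explicit formula $a_k=\tfrac32 B_{k-1}+1$ along the way, which may be of independent interest, at the cost of the extra bookkeeping you note in your final paragraph.
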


\begin{proof}
To show that this equation holds, we prove that the sequence
$c_n=\frac{2}{3}(m_{n+1} + 1)$ satisfies the recursion and initial conditions 
for $b_n$. Rather than use the form $b_n = b_{n-1} + b_{\lfloor \frac{n}{2} 
\rfloor}$ for the recursion, we will instead use $b_n - b_{n-1} = b_{\lfloor 
\frac{n}{2} \rfloor}$, being easier for computations.

Using Proposition~\ref{prop4}, we have
\begin{displaymath}
c_n - c_{n-1}  =  \frac23 \left( a_{\lfloor \frac{n+1}{2} \rfloor} + 
a_{\lceil \frac{n+1}{2} \rceil} + 1 \right) - \frac23 \left( a_{\lfloor 
\frac{n}{2} \rfloor} + a_{\lceil \frac{n}{2} \rceil} + 1 \right)
\end{displaymath}
Noting that $\lceil \frac{n}{2} \rceil = \lfloor \frac{n+1}{2} \rfloor$ and
$\lceil \frac{n+1}{2} \rceil = \lfloor \frac{n}{2} \rfloor + 1$ for all
integers $n \ge 1$, this reduces to
\begin{displaymath}
c_n-c_{n-1} = \frac23 \left( a_{\lfloor \frac{n}{2} \rfloor + 1} - a_{\lfloor
\frac{n}{2} \rfloor} \right)
\end{displaymath}

Now using the formula for $a_i$ given in Proposition~\ref{prop2} and 
Corollary~\ref{cor3}, we obtain
\begin{displaymath}
c_n-c_{n-1} = \frac23 \left( \lfloor \frac{n}{2} \rfloor + 2 + 
\sum_{k = 1}^{\lfloor \frac{n}{2} \rfloor + 1} m_k - \left[ \lfloor
\frac{n}{2} \rfloor + 1 + \sum_{k=1}^{\lfloor \frac{n}{2} \rfloor} m_k 
\right] \right)
\end{displaymath}
which reduces to
\begin{eqnarray*}
c_n-c_{n-1} & = & \frac23 \left( m_{\lfloor \frac{n}{2} + 1\rfloor} + 1\right)\\
& = & c_{\lfloor \frac{n}{2} \rfloor}
\end{eqnarray*}

Therefore $c_n = \frac23 (m_n+1)$ satsfies the same recursion as $b_n$, and
$c_1 = \frac23 (m_2 + 1) = 2 = b_1$, implying that $b_n = \frac23 (m_n+1)$ for
all $n \ge 2$, from which we obtain the result.
\end{proof}

\bibliographystyle{plain}

\end{document}